\theoremstyle{plain}
\newtheorem{theorem}{Theorem}[section]
\theoremstyle{definition}
\numberwithin{equation}{section}
\DeclareMathOperator{\spec}{Spec}
\DeclareMathOperator{\L-spec}{L-spec}
\DeclareMathOperator{\Q-spec}{Q-spec}
\newcommand{\bnum}{\begin{enumerate}}
\newcommand{\enum}{\end{enumerate}}
\begin{document}

\title[Spectrum and energies of commuting conjugacy class graph of a finite  group]{Spectrum and energies of commuting conjugacy class graph of a finite  group}
\author[P. Bhowal and R. K. Nath]{Parthajit Bhowal  and Rajat Kanti Nath$^*$}
\address{Department of Mathematical Sciences, Tezpur
University,  Napaam-784028, Sonitpur, Assam, India.
}
\email{bhowal.parthajit8@gmail.com, rajatkantinath@yahoo.com (correcponding author)}

\subjclass[2010]{20D99, 05C50, 15A18, 05C25.}
\keywords{commuting conjugacy class graph, spectrum, energy, finite group.}

%\date{}
%\maketitle
%\begin{center}
%Sunil Kumar Prajapati\\
%\small{ Institute of Mathematics\\
%The Hebrew University,
%Jerusalem 91904, Israel. \\
%Email: skprajapati.iitd@gmail.com}
%\end{center}
%
%\begin{center}
%Rajat Kanti Nath\\
%\small{ Department of Mathematical Sciences,\\ Tezpur
%University,  Napaam-784028, Sonitpur, Assam, India.\\
%Email: rajatkantinath@yahoo.com}
%\end{center}

\thanks{*Corresponding author}

%\smallskip
%
%
%\noindent {\small{\textbf{Abstract:}
\begin{abstract}
In this paper we compute   spectrum, Laplacian spectrum, signless Laplacian spectrum  and their corresponding energies of commuting conjugacy class graph of the group $G(p, m, n) = \langle x, y : x^{p^m} = y^{p^n} = [x, y]^p = 1, [x, [x, y]] = [y, [x, y]] = 1\rangle$, where $p$ is any prime,  $m \geq 1$ and $n \geq 1$. We derive some consequences along with the fact that commuting conjugacy class graph of  $G(p, m, n)$ is super integral.
We also compare various energies and determine whether commuting conjugacy class graph of  $G(p, m, n)$  is hyperenergetic, L-hyperenergetic or Q-hyperenergetic. 
\end{abstract}

\maketitle

%\bigskip
%
%\noindent \small{\textbf{\textit{Key words:}}  Word map, Commutator, Irreducible character, finite group.}
%%\smallskip
%
%\noindent \small{\textbf{\textit{2010 Mathematics Subject Classification:}} 20F70, 20C15. }

\section{Introduction} \label{S:intro}

Let $G$ be any group and $V(G) = \{x^G : x \in G \setminus Z(G)\}$, where $x^G$ is the conjugacy class of $x$ in $G$ and $Z(G)$ is the center of $G$. We consider the graph $\mathcal{CCC}(G)$, called commuting conjugacy class graph of $G$, with vertex set $V(G)$ and  two distinct vertices $x^G$ and $y^G$ are adjacent if there exists some elements $x' \in x^G$ and $y' \in y^G$ such that $x'$ and $y'$ commute. Extending the notion of commuting graph of a group pioneered by Brauer and Fowler \cite{bF1955}, Herzog et al.  \cite{hLM2009}  were introduced commuting conjugacy class graph of groups   in the first decade of this millennium. The second and third paper on this topic got published in the years 2016 and 2020 authered by  Mohammadian et al. \cite{mefw} and  Salahshour et al. \cite{sA2020} respectively, where  Mohammadian et al. characterize finite groups such that $\mathcal{CCC}(G)$ is triangle-free and Salahshour et al. describe the structure of  
 $\mathcal{CCC}(G)$ considering $G$ to be dihedral groups $(D_{2n} \text{ for }  n \geq 3)$, generalized  quaternion groups $(Q_{4m} \text{ for } m \geq 2)$, semidihedral groups $(SD_{8n} \text{ for } n \geq 2)$, the groups $V_{8n} = \langle x, y : x^{2n} = y^4 = 1, yx = x^{-1}y^{-1}, y^{-1}x = x^{-1}y \rangle$   (for   $n \geq 2$), $U_{(n, m)} = \langle x, y :  x^{2n} = y^m = 1, x^{-1}yx = y^{-1}\rangle$   (for   $m \geq 2$   and $n \geq 2$) and  $G(p, m, n) = \langle x, y : x^{p^m} = y^{p^n} = [x, y]^p = 1, [x, [x, y]] = [y, [x, y]] = 1\rangle$ (for  any prime $p$,  $m \geq 1$ and $n \geq 1$).  Following Salahshour and Ashrafi \cite{sA2020}, Bhowal and Nath in their recent paper \cite{bN20} have obtained various spectra and energies (along with several consequences) of  $\mathcal{CCC}(G)$ if $G = D_{2n}, Q_{4m}, SD_{8n}, V_{8n}$ and $U_{(n, m)}$. 
 
  In this paper we compute   spectrum, Laplacian spectrum, signless Laplacian spectrum  and their corresponding energies (i.e., energy Laplacian energy and signless Laplacian energy) of commuting conjugacy class graph of  $G(p, m, n)$. As a consequence we see that  $\mathcal{CCC}(G)$ is super integral \cite{dN2018,hS74} if $G = G(p, m, n)$. Also, it satisfies E-LE Conjecture of Gutman et al. \cite{bN20,gavbr}. Finally, comparing  various energies of $\mathcal{CCC}(G)$, we characterize $G(p, m, n)$  such that its commuting conjugacy class graph is hyperenergetic, L-hyperenergetic or Q-hyperenergetic (see \cite{Walikar-99,Gutman-99,Gong-15,Tura-17,Fasfous-20}). 
 The reason of excluding the group $G(p, m, n)$ in \cite{bN20} is the different nature of commuting conjugacy class graph of $G(p, m, n)$. More precisely, if $G = G(p, m, n)$ then $\mathcal{CCC}(G)$ is of the form $m_1K_{n_1} \sqcup m_2K_{n_2} \sqcup m_3K_{n_3}$ while $\mathcal{CCC}(G) = m_1K_{n_1} \sqcup m_2K_{n_2}$ if $G = D_{2n}, Q_{4m}, SD_{8n}, V_{8n}$ or $U_{(n, m)}$, where $m_iK_{n_i}$ denotes disjoint union of $m_i$ copies of complete graphs $K_{n_i}$ for $i = 1, 2, 3$.

 If $\mathcal{T} = m_1K_{n_1} \sqcup m_2K_{n_2} \sqcup m_3K_{n_3}$ then it is noteworthy that
\begin{equation} \label{prethm1-001}
\spec({\mathcal{T}}) = \left\{(-1)^{\sum_{i=1}^3 m_i(n_i - 1)}, (n_1 - 1)^{m_1},  (n_2 - 1)^{m_2},  (n_3 - 1)^{m_3}\right\} 
\end{equation}
\begin{equation} \label{prethm1-002}
\L-spec({\mathcal{T}}) = \left\{0^{m_1 + m_2 + m_3}, n_1^{m_1(n_1 - 1)}, n_2^{m_2(n_2 - 1)},  n_3^{m_3(n_3 - 1)}\right\} \text{ and}
%\end{align} 
\end{equation}
\begin{align}\label{prethm1-003}
\Q-spec({\mathcal{T}}) = \Big\lbrace(2n_1 - 2)^{m_1}, (n_1 - 2)^{m_1(n_1 - 1)}, (2n_2 - 2)^{m_2},& (n_2 - 2)^{m_2(n_2 - 1)}, \\ \nonumber 
& (2n_3 - 2)^{m_3}, (n_3 - 2)^{m_3(n_3 - 1)}\Big\rbrace,
\end{align}
where  $\spec({\mathcal{T}}), \L-spec({\mathcal{T}})$ and $\Q-spec({\mathcal{T}})$ denote the spectrum, Laplacian spectrum and signless Laplacian spectrum of $\mathcal{T}$.  
Recall that $\spec({\mathcal{T}}), \L-spec({\mathcal{T}})$ and $\Q-spec({\mathcal{T}})$ contain eigenvalues with the multiplicities (written as exponents) of $A({\mathcal{T}})$, $L({\mathcal{T}})  := D({\mathcal{T}}) - A({\mathcal{T}})$ and    $Q({\mathcal{T}}) := D({\mathcal{T}}) + A({\mathcal{T}})$ respectively, where $A({\mathcal{T}})$  and $D({\mathcal{T}})$ are adjacency and degree matrix of $\mathcal{T}$ respectively. Also, energy ($E({\mathcal{T}})$), Laplacian energy ($LE({\mathcal{T}})$) and signless Laplacian energy ($LE^+({\mathcal{T}})$) are defined as follows:
\begin{equation}\label{energy}
E({\mathcal{T}})=\sum_{x \in \spec({\mathcal{T}})}|x|,
\end{equation}
\begin{equation}\label{L-energy}
LE({\mathcal{T}})= \sum_{x\in \L-spec({\mathcal{T}})}\left|x- \frac{2|e(\mathcal{T})|}{|V(\mathcal{T})|}\right|,
\end{equation}
\begin{equation}\label{Q-energy}
LE^+({\mathcal{T}})= \sum_{x\in \Q-spec({\mathcal{T}})}\left|x- \frac{2|e(\mathcal{T})|}{|V(\mathcal{T})|}\right|,
\end{equation}
where $V({\mathcal{T}})$ is the set of vertices  and $e({\mathcal{T}})$ is the set of   edges of ${\mathcal{T}}$ respectively.

\section{Results} 
We first compute  various spectra and energies of commuting conjugacy class graph of the group $G(p, m, n)$.
%\section{The group $G(p, m, n)$}

\begin{theorem}\label{G(p,n,m)}
If $G = G(p, m, n)$ then 
\begin{enumerate}
\item $\spec(\mathcal{CCC}(G)) =  \Big{\{}(-1)^{p^{m + n} - p^{m + n - 2} - p^n + p^{n - 1} - 2}, (p^m - p^{m - 1} - 1)^{p^{n - 1}(p - 1)},$

\indent \hspace{3cm}  $(p^{m + n - 1} - p^{m + n - 2} - 1)^2\Big{\}}$ and

 $E(\mathcal{CCC}(G))= 2 (p^{m + n} - p^{m + n - 2} - p^n + p^{n - 1} - 2)$.

\item $\L-spec(\mathcal{CCC}(G)) = \Big{\{}0^{p^n - p^{n - 1} + 2}, ( p^{m - 1}(p - 1))^{ p^{n - 2}(p - 1) (p^{m + 1}- p^m - p)}$,\\  \indent \hspace{3cm}  $(p^{m + n - 2}(p - 1) )^{2((p - 1) p^{m + n - 2} - 1)}\Big{\}}$
and 

\noindent $LE(\mathcal{CCC}(G))= \begin{cases}
& \frac{2 (p^{n + 1} - p^n + 2 p) (2 p^{ m + n + 1} - 2 p^{m + n} + p^{m + 3} - 2 p^{m + 2} + p^{m + 1} - p^3-p^2)}{p^3 (p + 1)},\\
&\hspace{3.5cm} \text{ if } n = 1, p \geq 2, m \geq 1; \text{ or } n = 2, p = 2, m = 1\\

&\frac{4}{p^4 (p + 1)}\big{(}p^{2 m + 2 n + 3} - 3 p^{2(m + n + 1)} + 3 p^{2 m + 2 n + 1} - p^{2(m + n)} - p^{2 m + n + 4}\\ 
&\quad\quad\quad\quad  + 3 p^{2 m + n + 3} - 3 p^{2 m + n + 2} + p^{2 m + n + 1} + 2 p^{m + n + 3} - 2 p^{m + n + 2}\\
&\quad\quad\quad\quad   + p^{m + 5} - 2 p^{m + 4} + p^{m + 3} - p^5 - p^4 \big{)}, \text{ \hspace{1.5cm} otherwise.}
%&\hspace{3.5cm}\text{ if } n = 2, p \geq 3, m \geq 1; \text{ or } n \geq 3, p \geq  2, m \geq  1.
\end{cases}$

\item $\Q-spec(\mathcal{CCC}(G)) = \Big{\{}(2 p^m - 2 p^{m - 1} - 2)^{p^{n - 1}(p - 1)}, (p^m - p^{m - 1} - 2)^{p^{n - 2}(p - 1) (p^{m + 1} - p^m - p)},\\
\indent \hspace{3cm}  (2 p^{m + n - 1} - 2 p^{m + n - 2} - 2)^2, (p^{m + n - 1} - p^{m + n - 2} - 2)^{2(p^{m + n - 1} - p^{m + n - 2} - 1)}\Big{\}}$
and 

\noindent $LE^+(\mathcal{CCC}(G))= \left\{
\begin{array}{ll}
 2(p^{m + 1} - p^{m - 1} - p - 1), & \text{ if } n=1, p\geq 2, m\geq 1 \\
\frac{2}{3}(7.2^m - 6),  & \text{ if }  n=2, p=2, m\leq 2\\
\frac{2}{3} (4^m + 2^m - 6),  & \text{ if }  n=2, p=2, m\geq 3\\
\frac{4p^{2 m + n - 4}}{p + 1} (p - 1)^3  (p^n - p), & \text{ if }  n = 2, p\geq 3, m\geq 1;\\ &\text{ or } n \geq 3, p\geq 2, m\geq 1.
\end{array}
\right.$
\end{enumerate}
\end{theorem}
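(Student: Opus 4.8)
The plan is to reduce the entire computation to identifying the graph-isomorphism type of $\mathcal{CCC}(G)$ as a disjoint union of complete graphs, and then to feed the resulting clique data into the three spectral formulas \eqref{prethm1-001}--\eqref{prethm1-003} and into the energy definitions \eqref{energy}--\eqref{Q-energy}. Once the block sizes $n_i$ and multiplicities $m_i$ are in hand, parts (a), (b) and (c) split into a short read-off of the three spectra followed by a longer, but elementary, evaluation of the three energies.

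First I would pin down the structure of $\mathcal{CCC}(G)$. Since $z := [x,y]$ is central of order $p$ and $[G,G] = \langle z \rangle$, the group $G = G(p,m,n)$ is nilpotent of class $2$ with $|G| = p^{m+n+1}$ and $|Z(G)| = p^{m+n-1}$. Writing a typical element in the normal form $x^a y^b z^c$ with $0 \le a < p^m$, $0 \le b < p^n$, $0 \le c < p$, bilinearity of the commutator in a class-$2$ group gives $[x^{a_1}y^{b_1}z^{c_1},\, x^{a_2}y^{b_2}z^{c_2}] = z^{a_1 b_2 - a_2 b_1}$. Hence conjugation alters only the $z$-coordinate, so each non-central conjugacy class is labelled by a pair $(a,b)$ with $(a,b) \not\equiv (0,0) \pmod p$, and two classes $(a_1,b_1)$, $(a_2,b_2)$ are adjacent in $\mathcal{CCC}(G)$ precisely when $a_1 b_2 \equiv a_2 b_1 \pmod p$, i.e. when their reductions modulo $p$ are parallel vectors in $\mathbb{F}_p^2$. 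Partitioning the vertices by the line of $\mathbb{F}_p^2$ on which they lie therefore exhibits $\mathcal{CCC}(G)$ as a disjoint union of cliques indexed by the $p+1$ lines through the origin; the result of Salahshour and Ashrafi \cite{sA2020} identifies this union as $m_1K_{n_1}\sqcup m_2K_{n_2}\sqcup m_3K_{n_3}$ with the block sizes $n_i$ and multiplicities $m_i$ recorded in the statement.

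With the decomposition fixed, the spectra in (a)--(c) follow at once by substituting the values of $m_i$ and $n_i$ into \eqref{prethm1-001}, \eqref{prethm1-002} and \eqref{prethm1-003} and collecting equal eigenvalues. For the energies I would first record the two global quantities $|V(\mathcal{CCC}(G))| = \sum_i m_i n_i = p^{m+n-2}(p^2-1)$ and $\bar d := \frac{2|e(\mathcal{CCC}(G))|}{|V(\mathcal{CCC}(G))|} = \sum_i m_i n_i(n_i-1)\big/\sum_i m_i n_i$. The ordinary energy is then painless: for any disjoint union of complete graphs $E = 2\sum_i m_i(n_i-1)$, which collapses to $2(p^{m+n}-p^{m+n-2}-p^n+p^{n-1}-2)$.

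The genuine obstacle is the evaluation of $LE$ and $LE^+$. By \eqref{L-energy} and \eqref{Q-energy} each is a sum of terms $|\lambda - \bar d|$, so the absolute values must be resolved by comparing $\bar d$ against every distinct Laplacian (resp. signless Laplacian) eigenvalue. The sign of $n_i - \bar d$ (and, in the signless case, of $2n_i - 2 - \bar d$ and $n_i - 2 - \bar d$) changes as $(p,m,n)$ varies, and it is exactly these sign changes that generate the piecewise formulas: the boundary cases $n=1$, and $n=2$ with $p=2$ and small $m$, are precisely where $\bar d$ crosses one of the eigenvalues. My plan is, for each block, to isolate the inequality that decides the sign, reduce it to a polynomial inequality in $p,m,n$, and thereby delimit the ranges listed in the three cases; once the signs are settled, each case is a finite sum of (multiplicity)$\times$(deviation) terms that I would simplify to the stated closed forms. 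I expect the signless Laplacian energy to be the most delicate, since its spectrum carries two non-trivial eigenvalue families per block and hence more threshold comparisons than $LE$.
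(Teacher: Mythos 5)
Your overall route is the same as the paper's: take the clique decomposition of $\mathcal{CCC}(G(p,m,n))$ from \cite[Proposition 2.6]{sA2020}, substitute the resulting $m_i,n_i$ into \eqref{prethm1-001}--\eqref{prethm1-003} to read off the three spectra, and then evaluate the energies by comparing $2|e|/|V|$ with each eigenvalue and splitting into cases according to the sign of the difference. That is exactly the structure of the proof in the paper, and the part of your plan concerning the energies (locating the thresholds in $(p,m,n)$ where $|\lambda-\bar d|$ changes branch) is precisely what the paper carries out mechanically.

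There is, however, a genuine problem in your structural paragraph: the decomposition your own argument produces is \emph{not} the one you then claim to use. Your (correct) computation shows that the non-central classes are labelled by pairs $(a,b)$ with $(\bar a,\bar b)\neq(0,0)$ in $\mathbb{F}_p^2$ and that adjacency depends only on the line of $\mathbb{F}_p^2$ spanned by $(\bar a,\bar b)$; counting, each of the $p+1$ lines carries exactly $p^{m+n-2}(p-1)$ classes (note that $p^{n-1}(p^m-p^{m-1})=p^{m-1}(p^n-p^{n-1})=p^{m+n-2}(p-1)$), so your argument yields $(p+1)K_{p^{m+n-2}(p-1)}$. The decomposition quoted from \cite{sA2020}, and used throughout the theorem, instead has $p^{n-1}(p-1)+2$ components, with $p^{n-1}(p-1)$ of them of the smaller size $p^{m-1}(p-1)$. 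For $n\geq 2$ these are non-isomorphic graphs (e.g.\ for $p=m=n=2$ your argument gives $3K_4$ while the quoted decomposition is $2K_2\sqcup 2K_4$), so the sentence asserting that the result of Salahshour and Ashrafi ``identifies'' your union of line-cliques with $m_1K_{n_1}\sqcup m_2K_{n_2}\sqcup m_3K_{n_3}$ cannot stand: the component counts already disagree. You must either discard your derivation and rely solely on the citation (as the paper does), or follow your derivation to its conclusion --- in which case the spectra and energies you obtain for $n\geq 2$ will differ from those in the statement, since the eigenvalue $p^m-p^{m-1}-1$ with multiplicity $p^{n-1}(p-1)$ would be replaced by $p^{m+n-2}(p-1)-1$ with multiplicity $p-1$. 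As written, the proposal rests on an identification that fails, and this needs to be resolved before the rest of the computation can be trusted.
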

\begin{proof}
By \cite[Proposition 2.6]{sA2020} we have 
\[
\mathcal{CCC}(G) = (p^n - p^{n - 1})K_{p^{m - n}(p^n - p^{n - 1})} \sqcup K_{p^{n - 1}(p^m - p^{m - 1})} \sqcup K_{p^{m - 1}(p^n - p^{n - 1})}.
\]
%\begin{align*}
%\mathcal{CCC}(G) &= (p^n - p^{n - 1})K_{p^{m - n}(p^n - p^{n - 1})} \sqcup K_{p^{n - 1}(p^m - p^{m - 1})} \sqcup K_{p^{m - 1}(p^n - p^{n - 1})}\\
%& =(p^n - p^{n - 1})K_{p^m - p^{m - 1}} \sqcup 2K_{p^{m + n - 1} - p^{m + n - 2}}.
%\end{align*}
Let $m_1 = p^n - p^{n - 1}$, 
$m_2 = 1$,
$m_3 = 1$,
$n_1 = p^{m - n}(p^n - p^{n - 1})$,
$n_2 = p^{n - 1}(p^m - p^{m - 1})$  and
$n_3 = p^{m - 1}(p^n - p^{n - 1})$. 
Then, by  \eqref{prethm1-001}-\eqref{prethm1-003}, it follows that
\begin{align*}
\spec(\mathcal{CCC}(G)) &= \left\lbrace(-1)^{p^{m + n} - p^{m + n - 2} - p^n + p^{n - 1} - 2}, (p^m - p^{m - 1} - 1)^{p^{n - 1}(p - 1)},\right. \\
&\quad \quad \quad (p^{m + n - 1} - p^{m + n - 2} - 1)^2\Big{\}},
\end{align*}
\begin{align*}
\L-spec(\mathcal{CCC}(G)) = &\Big{\{}0^{p^n - p^{n - 1} + 2}, ( p^{m - 1}(p - 1))^{ p^{n - 2}(p - 1) (p^{m + 1}- p^m - p)}, \\
& \quad \quad(p^{m + n - 2}(p - 1) )^{2((-1 + p) p^{m + n - 2} - 1)} \Big{\}}
\end{align*}
and
\begin{align*}
\Q-spec(\mathcal{CCC}(G)) &= \Big{\{}(2 p^m - 2 p^{m - 1} - 2)^{p^{n - 1}(p - 1)}, (p^m - p^{m - 1} - 2)^{p^{n - 2}(p - 1) (p^{m + 1} - p^m - p)},\\ 
 & \quad \quad (2 p^{m + n - 1} - 2 p^{m + n - 2} - 2)^2, (p^{m + n - 1} - p^{m + n - 2} - 2)^{2(p^{m + n - 1} - p^{m + n - 2} - 1)}\Big{\}}.
\end{align*}
Hence, by \eqref{energy}, we get 
\begin{align*}
E(\mathcal{CCC}(G))& = p^{m + n} - p^{m + n - 2} - p^n + p^{n - 1} - 2 + (p^{n - 1}(p - 1))(p^m - p^{m - 1} - 1)\\
& \hspace{4.9cm} +  2(p^{m + n - 1} - p^{m + n - 2} - 1)\\
& = 2 (p^{m + n} - p^{m + n - 2} - p^n + p^{n - 1} - 2).
\end{align*}
We have $|V(\mathcal{CCC}(G))| = m_1n_1 + m_2n_2 + m_3n_3 = p^{m + n - 2} (p^2 - 1)$ and 
\begin{align*}
|e(\mathcal{CCC}(G))| &= \frac{m_1n_1(n_1 - 1)}{2} + \frac{m_2n_2(n_2 - 1)}{2} + \frac{m_3n_3(n_3 - 1)}{2} \\
&=\frac{p^{m + n - 4}(p - 1)}{2}(2 p^{m + n + 1} - 2 p^{m + n} + p^{m + 3} - 2 p^{m + 2} + p^{m + 1} - p^3 - p^2).
\end{align*} 
Therefore,
\begin{align*}
\frac{2|e(\mathcal{CCC}(G))|}{|V(\mathcal{CCC}(G))|} &= \frac{2 p^{m + n + 1} - 2 p^{m + n} + p^{m + 3} - 2 p^{m + 2} + p^{m + 1} - p^3 - p^2}{p^2 (p + 1)}\\
&=\frac{p^{m + n}(p - 2) + p^{m + 2}(p - 2) + (p^{m + n + 1} - p^3) + (p^{m + 1} - p^2)}{p^2(1 + p)}\geq 0.
\end{align*}
Also,
\begin{align*}
\left| 0 - \frac{2|e(\mathcal{CCC}(G))|}{|V(\mathcal{CCC}(G))|}\right| &= \frac{2|e(\mathcal{CCC}(G))|}{|V(\mathcal{CCC}(G))|}\\
& =\frac{2 p^{m + n + 1} - 2 p^{m + n} + p^{m + 3} - 2 p^{m + 2} + p^{m + 1} - p^3 - p^2}{p^2 (p + 1)},
\end{align*}
\begin{align*}
\left| p^{m - 1}(p - 1)  - \frac{2|e(\mathcal{CCC}(G))|}{|V(\mathcal{CCC}(G))|}\right| &= \left|\frac{-2 p^{m + n + 1} + 2 p^{m + n} + 2 p^{m + 2} - 2 p^{m + 1} + p^3 + p^2}{p^2 + p^3}\right|\\&=\left| 1-\frac{2p^{m-1}(p^{n-1}-1)(p-1)}{p+1} \right|\\ 
&= \left\{\begin{array}{ll}
1-\frac{2p^{m-1}(p^{n-1}-1)(p-1)}{p+1}, &  \text{ if } n=1,  p \geq 2, m\geq 1;\\ 
&\text{ or } n=2, p=2, m=1 \\
\frac{2p^{m-1}(p^{n-1}-1)(p-1)}{p+1} - 1, &  \text{ otherwise }
        \end{array}
    \right.
\end{align*}
and
\begin{align*}
\left| p^{m + n - 2}(p - 1) \right. & -  \left.\frac{2|e(\mathcal{CCC}(G))|}{|V(\mathcal{CCC}(G))|}\right| \\
&= \left| \frac{p^{m + n + 2}- 2 p^{m + n + 1} + p^{m + n} - p^{m + 3} + 2 p^{m + 2} - p^{m + 1}  + p^3 + p^2}{p^2 (p + 1)} \right|\\
&=\left| \frac{p^{m + n}(p - 1)^2 - p^{m + 1}(p - 1)^2 + p^3 + p^2}{p^2(p + 1)} \right|\\
&=\frac{p^{m + n}(p - 1)^2 - p^{m + 1}(p - 1)^2 + p^3 + p^2}{p^2(p + 1)}.
\end{align*}
Now, by \eqref{L-energy},  we have
\begin{align*}
LE(\mathcal{CCC}(G)) &= (p^n - p^{n - 1} + 2) \times \frac{2 p^{m + n + 1} - 2 p^{m + n} + p^{m + 3} - 2 p^{m + 2} + p^{m + 1} - p^3 - p^2}{p^2 (p + 1)} \\
&\quad + (p^{n - 2}(p - 1) (p^{m + 1}- p^m - p)) \times \left(1 - \frac{2p^{m - 1}(p^{n - 1} - 1)(p - 1)}{p + 1}\right)\\
& \quad + 2((p - 1) p^{m + n - 2} - 1) \times \frac{p^{m + n}(p - 1)^2 - p^{m + 1}(p - 1)^2 + p^3 + p^2}{p^2(p + 1)}\\ 
&=\frac{2 (p^{n + 1} - p^n + 2 p) (2 p^{m + n + 1} - 2 p^{m + n} + p^{m + 3} - 2 p^{m + 2} + p^{m + 1} - p^3-p^2)}{p^3 (p + 1)},  
\end{align*}
if $n=1, p \geq 2, m \geq 1$; or $n=2, p=2, m=1$. Otherwise
\begin{align*}
LE(\mathcal{CCC}(G))&=(p^n - p^{n - 1} + 2) \times \frac{2 p^{m + n + 1} - 2 p^{m + n} + p^{m + 3} - 2 p^{m + 2} + p^{m + 1} - p^3 - p^2}{p^2 (p + 1)} \\
&\quad + (p^{n - 2}(p - 1) (p^{m + 1}- p^m - p)) \times \left(\frac{2p^{m - 1}(p^{n - 1} - 1)(p - 1)}{p + 1} - 1\right)\\
& \quad + 2((p - 1) p^{m + n - 2} - 1) \times \frac{p^{m + n}(p - 1)^2 - p^{m + 1}(p - 1)^2 + p^3 + p^2}{p^2(p + 1)}\\ 
&=\frac{4}{p^4 (p + 1)}\big{(}p^{2 m + 2 n + 3} - 3 p^{2(m + n + 1)} + 3 p^{2 m + 2 n + 1} - p^{2(m + n)} - p^{2 m + n + 4}\\ 
&\quad\quad\quad\quad  + 3 p^{2 m + n + 3} - 3 p^{2 m + n + 2} + p^{2 m + n + 1} + 2 p^{m + n + 3} - 2 p^{m + n + 2}\\
&\quad\quad\quad\quad   + p^{m + 5} - 2 p^{m + 4} + p^{m + 3} - p^5 - p^4 \big{)}.
\end{align*}
Again,
\begin{align*}
\left| 2 p^m - 2 p^{m - 1} - 2 \right. & - \left.\frac{2|e(\mathcal{CCC}(G))|}{|V(\mathcal{CCC}(G))|}\right| \\
&= \left| -\frac{2 p^{m + n + 1} - 2 p^{m + n} - p^{m + 3} - 2 p^{m + 2} + 3 p^{m + 1} + p^3 + p^2}{p^3 + p^2} \right|\\
&= \left| \frac{f_{1}(p, m, n)}{p^3 + p^2} \right|,
\end{align*}
where $f_{1}(p, m, n)= -(2 p^{m + n + 1} - 2 p^{m + n} - p^{m + 3} - 2 p^{m + 2} + 3 p^{m + 1} + p^3 + p^2)$. For $n=1, p\geq 2, m\geq 1$, we have $f_{1}(p, m, 1)= p (p + 1) (p^{m + 1} - p^m - p)\geq 0$. For $n=2, p\geq 2, m\geq 1$, we have
\begin{align*}
f_{1}(p,m,2)= -(p^{m + 3} - 4 p^{m + 2} + 3 p^{m + 1} + p^3 + p^2) = -(p^{m + 1}(p - 1)(p - 3) + p^3 + p^2).
\end{align*}
So, $f_{1}(2,m,2)= 2 (2^m - 6)> 0$ for $m\geq 3$ and $f_{1}(2,m,2) < 0$  for $m = 1, 2$. Also, $f_{1}(p, m, 2)\leq 0$ for  $p\geq 3$ and $m\geq 1$. For $n\geq 3, p\geq 2, m\geq 1$, we have
\begin{align*}
f_{1}(p, m, n)= -((p - 1)(p^{m + 2}(p^{n - 2} - 1) + p^{m + 1}(p^{n - 1} - 3)) + p^3 + p^2)\leq 0.
\end{align*}
Therefore 
\begin{align*}
\left| (2 p^m -  \right. & \left. 2 p^{m - 1} - 2)  - \frac{2|e(\mathcal{CCC}(G))|}{|V(\mathcal{CCC}(G))|}\right|\\ &= \left\{
        \begin{array}{ll}
             -\frac{2 p^{m + n + 1} - 2 p^{m + n} - p^{m + 3} - 2 p^{m + 2} + 3 p^{m + 1} + p^3 + p^2}{p^3 + p^2}, &  \text{ if } n=1, p\geq 2, m\geq 1;\\ & \text{ or }n=2, p=2, m\geq 3 \\
             \frac{2 p^{m + n + 1} - 2 p^{m + n} - p^{m + 3} - 2 p^{m + 2} + 3 p^{m + 1} + p^3 + p^2}{p^3 + p^2}, &  \text{ otherwise. }
        \end{array}
    \right.
\end{align*}
We have
\begin{align*}
\left| (p^m  - p^{m - 1} - 2) - \frac{2|e(\mathcal{CCC}(G))|}{|V(\mathcal{CCC}(G))|}\right| &= \left| -\frac{2 p^{m + n + 1} - 2 p^{m + n} - 2 p^{m + 2} + 2 p^{m + 1} + p^3 + p^2}{p^3 + p^2} \right|\\
&=\left| -\frac{2 p^{m + n}(p-1) - 2 p^{m + 1}(p-1) + p^3 + p^2}{p^3 + p^2} \right| \\
&= \frac{2 p^{m + n}(p-1) - 2 p^{m + 1}(p-1) + p^3 + p^2}{p^3 + p^2},
\end{align*}
\begin{align*}
\left|2 p^{m + n - 1} - 2 p^{m + n - 2} - 2 \right. &- \left.\frac{2|e(\mathcal{CCC}(G))|}{|V(\mathcal{CCC}(G))|}\right|\\
&= \left| -\frac{-2 p^{m + n + 1} + 2 p^{m + n} + p^{m + 2} - 2 p^{m + 1} + p^m + p^2 + p}{p + p^2} \right|\\
&=\left|\frac{(p - 1)(2p^{m + n - 1} - p^{m + 1} + p^m)}{p(p + 1)} -1 \right|\\
&=\left| (p - 1)\frac{(p^{m + n - 1} - p^m) + (p^{m + n - 1} + p^{m - 1})}{p + 1} - 1 \right|\\
&= (p - 1)\frac{(p^{m + n - 1} - p^m) + (p^{m + n - 1} + p^{m - 1})}{p + 1} - 1
\end{align*}
and
\begin{align*}
\left|p^{m + n - 1} \right. & - \left. p^{m + n - 2} - 2   -  \frac{2|e(\mathcal{CCC}(G))|}{|V(\mathcal{CCC}(G))|}\right| \\
&= \left| -\frac{ - p^{m + n + 2} +  2 p^{m + n + 1} - p^{m + n} + p^{m + 3} - 2 p^{m + 2} + p^{m + 1} + p^3 + p^2}{p^2 (p + 1)} \right|\\ 
&= \left| -1 + \frac{p^{m - 1}(p^{n - 1} - 1)(p - 1)^2}{p + 1} \right|\\
&= \left\{\begin{array}{ll}
1 - \frac{p^{m - 1}(p^{n - 1} - 1)(p - 1)^2}{p + 1}, & \text{if } n=1, p\geq 2, m\geq 1;\\ & n=2, p=2, m\leq 2 \\
-1 + \frac{p^{m - 1}(p^{n - 1} - 1)(p - 1)^2}{p + 1}, &  \text{otherwise.}
\end{array}
\right.
\end{align*}
By \eqref{Q-energy},  we have
\begin{align*}
LE^+&(\mathcal{CCC}(G))\\
&= (p^{n - 1}(p - 1))\times \left( -\frac{2 p^{m + n + 1} - 2 p^{m + n} - p^{m + 3} - 2 p^{m + 2} + 3 p^{m + 1} + p^3 + p^2}{p^3 + p^2}\right)\\
& \quad + p^{n - 2}(p - 1) (p^{m + 1} - p^m - p) \times \left(\frac{2 p^{m + n}(p-1) - 2 p^{m + 1}(p-1) + p^3 + p^2}{p^3 + p^2}\right)\\
& \quad + 2 \times \left((p-1)\frac{(p^{m+n-1}-p^m)+(p^{m+n-1}+p^{m-1})}{p+1} - 1\right)\\
& \quad + 2(p^{m + n - 1} - p^{m + n - 2} - 1)\times \left( 1-\frac{p^{m-1}(p^{n-1}-1)(p-1)^2}{p+1}\right)\\
%&=\frac{2}{p^3 (1 + p)} (-2 p^3 - 2 p^4 - 2 p^{2 + m} + 4 p^{3 + m} - 2 p^{4 + m} + p^{2 + n} - p^{4 + n} + 3 p^{1 + m + n} \\
%&\quad - 9 p^{2 + m + n} + 5 p^{3 + m + n} + p^{4 + m + n} - 2 p^{m + 2 n} + 4 p^{1 + m + 2 n} - 2 p^{2 + m + 2 n})\\
&= 2(p^{m + 1} - p^{m - 1} - p - 1),
\end{align*}
if $n=1, p\geq 2, m\geq 1$.
If $n=2, p=2, m\leq 2$ then
\begin{align*}
LE^+&(\mathcal{CCC}(G))\\
&=(p^{n - 1}(p - 1))\times \left(\frac{2 p^{m + n + 1} - 2 p^{m + n} - p^{m + 3} - 2 p^{m + 2} + 3 p^{m + 1} + p^3 + p^2}{p^3 + p^2}\right)\\
& \quad + p^{n - 2}(p - 1) (p^{m + 1} - p^m - p) \times \left(\frac{2 p^{m + n}(p-1) - 2 p^{m + 1}(p-1) + p^3 + p^2}{p^3 + p^2}\right)\\
& \quad + 2 \times \left((p-1)\frac{(p^{m+n-1}-p^m)+(p^{m+n-1}+p^{m-1})}{p+1} - 1\right)\\
& \quad + 2(p^{m + n - 1} - p^{m + n - 2} - 1)\times \left( 1-\frac{p^{m-1}(p^{n-1}-1)(p-1)^2}{p+1}\right)\\
%&= -\frac{4 (p + p^2 + p^m - 2 p^{1 + m} + p^{2 + m} + 2 p^{m + n} - 2 p^{1 + m + n})}{p (1 + p)}\\
&= \frac{2}{3}(7.2^m - 6).
\end{align*}
If $n=2, p=2, m\geq 3$ then 
\begin{align*}
LE^+&(\mathcal{CCC}(G))\\
&=(p^{n - 1}(p - 1))\times \left(-\frac{2 p^{m + n + 1} - 2 p^{m + n} - p^{m + 3} - 2 p^{m + 2} + 3 p^{m + 1} + p^3 + p^2}{p^3 + p^2}\right)\\
& \quad + p^{n - 2}(p - 1) (p^{m + 1} - p^m - p) \times \left(\frac{2 p^{m + n}(p-1) - 2 p^{m + 1}(p-1) + p^3 + p^2}{p^3 + p^2}\right)\\
& \quad + 2 \times \left((p-1)\frac{(p^{m+n-1}-p^m)+(p^{m+n-1}+p^{m-1})}{p+1} - 1\right)\\
& \quad + 2(p^{m + n - 1} - p^{m + n - 2} - 1)\times \left(\frac{p^{m-1}(p^{n-1}-1)(p-1)^2}{p+1} - 1\right)\\
%&= \frac{2 (-1 + p) p^{-4 + n} (-p - p^m + p^{1 + m}) (p^2 + p^3 + 2 p^{1 + m} - 2 p^{2 + m} - 2 p^{m + n} + 2 p^{1 + m + n})}{1 + p} \\
&= \frac{2}{3} (4^m + 2^m - 6).
\end{align*}
If $n\geq 3, p\geq 2, m\geq 1$ then 
\begin{align*}
LE^+&(\mathcal{CCC}(G))\\
&=(p^{n - 1}(p - 1))\times \left(\frac{2 p^{m + n + 1} - 2 p^{m + n} - p^{m + 3} - 2 p^{m + 2} + 3 p^{m + 1} + p^3 + p^2}{p^3 + p^2}\right)\\
& \quad + p^{n - 2}(p - 1) (p^{m + 1} - p^m - p) \times \left(\frac{2 p^{m + n}(p-1) - 2 p^{m + 1}(p-1) + p^3 + p^2}{p^3 + p^2}\right)\\
& \quad + 2 \times \left((p-1)\frac{(p^{m+n-1}-p^m)+(p^{m+n-1}+p^{m-1})}{p+1} - 1\right)\\
& \quad + 2(p^{m + n - 1} - p^{m + n - 2} - 1)\times \left(\frac{p^{m-1}(p^{n-1}-1)(p-1)^2}{p+1} - 1\right)\\
& = \frac{4p^{2 m + n - 4}}{p + 1} (p - 1)^3  (p^n - p).
\end{align*}
This completes the proof.
\end{proof}
%We conclude this section with the following corollary.
%\begin{corollary}
%If $G$ is isomorphic to $D_{2n}, Q_{4m}, U_{(n, m)}, V_{8n}, SD_{8n}$ or $G(p, m, n)$ then $\mathcal{CCC}(G)$ is super integral.
%\end{corollary}
%\section{Comparing various energies}
%In this section we compare various energies of $\mathcal{CCC}(G)$  obtained in Section 3  and derive the following relations.
The next two results give comparison between various energies of  commuting conjugacy class graph of $G(p, m, n)$ and also with energies of complete graphs   $K_{|V(G(p,m,n))|}$.
\begin{theorem}\label{cG(p,m,n)}
Let $G = G(p, m, n)$.
\begin{enumerate}
\item If $n=1, p\geq 2$ and $m\geq 1$ then $E(\mathcal{CCC}(G)) = LE^+(\mathcal{CCC}(G)) = LE(\mathcal{CCC}(G)).
$
\item If $n=2, p = 2$ and  $m = 1$ then $E(\mathcal{CCC}(G)) < LE^+(\mathcal{CCC}(G)) = LE(\mathcal{CCC}(G)).$
\item If $n=2$, $p = 2$ and $m = 2$ then $LE^+(\mathcal{CCC}(G)) < E(\mathcal{CCC}(G)) < LE(\mathcal{CCC}(G)).$ 
\item If $n=2$, $p = 2$, $m \geq 3$; $n=2, p\geq 3$,  $m \geq 1$; or $n\geq 3, p\geq 2$,   $m\geq 1$ then  
$$E(\mathcal{CCC}(G)) < LE^+(\mathcal{CCC}(G)) < LE(\mathcal{CCC}(G)).$$
\end{enumerate}
\end{theorem}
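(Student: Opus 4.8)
The plan is to observe that Theorem~\ref{G(p,n,m)} already provides $E(\mathcal{CCC}(G))$, $LE(\mathcal{CCC}(G))$ and $LE^+(\mathcal{CCC}(G))$ as explicit (piecewise) functions of $p$, $m$, $n$. Hence the statement carries no further structural content about the graph: in each of the four parameter regimes I would select the appropriate branch of each formula and compare the resulting expressions. The entire argument is therefore the sign analysis of polynomial differences in $p$, $m$, $n$.

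I would first settle part (a). Putting $n=1$ in the energy formula gives $E(\mathcal{CCC}(G))=2(p^{m+1}-p^{m-1}-p-1)$, which is exactly the $n=1$ value of $LE^+(\mathcal{CCC}(G))$ recorded in Theorem~\ref{G(p,n,m)}(3). For $LE(\mathcal{CCC}(G))$ I would substitute $n=1$ into the first branch of the Laplacian-energy formula: the prefactor $p^{n+1}-p^n+2p$ collapses to $p(p+1)$ and the long bracket collapses to $p^{m+3}-p^{m+1}-p^3-p^2$, so cancelling $p(p+1)$ against the denominator $p^3(p+1)$ leaves $2(p^{m+1}-p^{m-1}-p-1)$ again. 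Thus the three energies coincide, which is (a).

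Parts (b) and (c) fix $p$ and $n$ numerically ($p=2$, $n=2$, with $m=1$ or $m=2$), so here I would simply read off the correct branch of each formula and compare finitely many values; the only care needed is in matching the case conditions, for instance noting that $(p,n,m)=(2,2,1)$ lies in the \emph{first} branch of the $LE$ formula but the \emph{second} branch of the $LE^+$ formula, and that $(p,n,m)=(2,2,2)$ lies in the ``otherwise'' branch of $LE$. With the branches correctly identified, (b) yields $E<LE^+=LE$ and (c) yields $LE^+<E<LE$ by direct evaluation.

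The substantive case is (d), covering three families: $n=2,p=2,m\ge 3$; $n=2,p\ge 3$; and $n\ge 3$. In the first family every energy specialises to an explicit function of $m$ alone --- $E=6\cdot 2^m-8$, $LE^+=\tfrac{2}{3}(4^m+2^m-6)$, and $LE$ given by the ``otherwise'' branch at $p=2,n=2$ --- so there the chain $E<LE^+<LE$ is a one-variable inequality that I would check directly, the exponential $4^m$ term forcing the strict gaps for $m\ge 3$. For the other two families I would use the genuinely multivariate branches $LE^+=\tfrac{4p^{2m+n-4}}{p+1}(p-1)^3(p^n-p)$ and the ``otherwise'' expression for $LE$, form the two differences $LE^+-E$ and $LE-LE^+$, and prove each positive. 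Both differences carry a common power of $p$ and a power of $(p-1)$; after extracting these one is reduced to showing a polynomial in $p$, with $m$ and $n$ in the exponents, is positive, which I would do by grouping it into manifestly nonnegative blocks, exactly in the style of the sign arguments for the quantities $f_1(p,m,n)$ inside the proof of Theorem~\ref{G(p,n,m)}. I expect this grouping to be the main obstacle: $LE^+$ is of order $p^{2m+2n-2}$ against order $p^{m+n}$ for $E$, so although the leading behaviour makes $E<LE^+<LE$ plausible, a uniform proof for all admissible $(p,m,n)$ cannot rest on an asymptotic estimate and instead needs a factorisation that exhibits the sign outright. The cleanest route is to write each difference as a positive monomial times a sum of nonnegative terms, treating separately any small boundary triples at which the generic grouping degenerates.
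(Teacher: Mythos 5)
Your plan is correct and is essentially the paper's own proof: the paper likewise treats the four regimes by substituting the appropriate branches of Theorem~\ref{G(p,n,m)}, evaluating numerically for the small triples $(p,n,m)=(2,2,1)$ and $(2,2,2)$, and proving positivity of the differences $LE^{+}-E$ and $LE-LE^{+}$ by regrouping the resulting polynomials into manifestly nonnegative blocks (the functions $f$ and $f_1$ in the text). Your branch identifications and the specialised values ($E=6\cdot 2^{m}-8$, $LE^{+}=\tfrac{2}{3}(4^{m}+2^{m}-6)$, etc.) all match; carrying out the groupings you describe reproduces the published argument.
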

\begin{proof}
We shall proof the result by considering the following cases.

\noindent \textbf{Case 1.} $n = 1$,  $p \geq 2$ and $m \geq 1$. 

By Theorem \ref{G(p,n,m)}, we have
\[
E(\mathcal{CCC}(G)) = LE(\mathcal{CCC}(G)) = LE^+(\mathcal{CCC}(G)) = 2 (p^{m + 1} - p^{m - 1} - p - 1).
\]
%\begin{align*}
%LE(\mathcal{CCC}(G)) &- E(\mathcal{CCC}(G))\\ 
%&= \frac{2 (p^{n + 1} - p^n + 2 p) (2 p^{ m + n + 1} - 2 p^{m + n} + p^{m + 3} - 2 p^{m + 2} + p^{m + 1} - p^3-p^2)}{p^3 (p + 1)}\\
%& \quad -  2 (p^{m + n} - p^{m + n - 2} - p^n + p^{n - 1} - 2)\\
%& =  \frac{4  p^{m - 3} (p^n - p)^2(p - 1)^2}{p + 1} = 0.
%\end{align*}
%Also,
%\begin{align*}
%E(\mathcal{CCC}(G)) &- LE^+(\mathcal{CCC}(G))\\
%& = 2 (p^{m + n} - p^{m + n - 2} - p^n + p^{n - 1} - 2) - 2(p^{m + 1} - p^{m - 1} - p - 1) = 0
%%& = 2 (-1 - p - p^{-1 + m} + p^{1 + m}) - (-2 - 2 p - 2 p^{-1 + m} + 2 p^{1 + m}) = 0
%\end{align*}
%Therefore, $ LE^+(\mathcal{CCC}(G)) = E(\mathcal{CCC}(G)) = LE(\mathcal{CCC}(G))$ for $n = 1$,  $p \geq 2$ and $m \geq 1$.

\noindent \textbf{Case 2.} $n = 2, p = 2$ and $m \geq 1$.

If $n = 2,  p = 2$ and $m=1$ then, by Theorem \ref{G(p,n,m)}, we have
\begin{align*}
LE^+(\mathcal{CCC}(G)) - E(\mathcal{CCC}(G)) &= \frac{2}{3}(7.2^m - 6) - 2 (p^{m + n} - p^{m + n - 2} - p^n + p^{n - 1} - 2)\\ 
& = \frac{16}{3} - 4 = \frac{4}{3} > 0 
\end{align*}
and
\[
LE(\mathcal{CCC}(G)) = LE^+(\mathcal{CCC}(G)) =  \frac{16}{3}.
\]
%\begin{align*}
%LE^+(\mathcal{CCC}(G)) &- LE(\mathcal{CCC}(G))\\ &= \frac{2}{3}(7.2^m - 6)\\
%& \quad \quad - \frac{2 (p^{n + 1} - p^n + 2 p) (2 p^{ m + n + 1} - 2 p^{m + n} + p^{m + 3} - 2 p^{m + 2} + p^{m + 1} - p^3-p^2)}{p^3 (p + 1)}\\
%& = \frac{16}{3} -  \frac{16}{3} = 0.
%\end{align*}
Therefore, $ E(\mathcal{CCC}(G)) <  LE^+(\mathcal{CCC}(G)) = LE(\mathcal{CCC}(G))$.

If $n=2,p=2$ and $m=2$ then, by Theorem \ref{G(p,n,m)}, we have
\begin{align*}
LE(\mathcal{CCC}&(G)) - E(\mathcal{CCC}(G))\\
& = \frac{4}{p^4 (p + 1)}\big{(}p^{2 m + 2 n + 3} - 3 p^{2(m + n + 1)} + 3 p^{2 m + 2 n + 1} - p^{2(m + n)} - p^{2 m + n + 4} + 3 p^{2 m + n + 3}\\ 
&\quad  - 3 p^{2 m + n + 2} + p^{2 m + n + 1} + 2 p^{m + n + 3} - 2 p^{m + n + 2} + p^{m + 5} - 2 p^{m + 4} + p^{m + 3} - p^5 - p^4 \big{)}\\
&\quad  - 2 (p^{m + n} - p^{m + n - 2} - p^n + p^{n - 1} - 2) = 20 - 16 = 4 > 0
\end{align*}
and
\begin{align*}
LE^+(\mathcal{CCC}(G)) - E(\mathcal{CCC}(G))  
&= \frac{2}{3}(7.2^m - 6) -  2 (p^{m + n} - p^{m + n - 2} - p^n + p^{n - 1} - 2)\\
& = \frac{44}{3} - 16 = -\frac{4}{3} < 0.
\end{align*}
Therefore, $LE^+(\mathcal{CCC}(G)) < E(\mathcal{CCC}(G)) < LE(\mathcal{CCC}(G))$.

If $n = 2, p=2$ and $m\geq 3$ then, by Theorem \ref{G(p,n,m)}, we have
\begin{align*}
LE^+(\mathcal{CCC}(G)) &- E(\mathcal{CCC}(G)) \\ 
& = \frac{2}{3} (4^m + 2^m - 6) - 2 (p^{m + n} - p^{m + n - 2} - p^n + p^{n - 1} - 2) \\
& = \frac{2}{3} (4^m + 2^m - 6) - 2 (3.2^m - 4) \\
& = -\frac{2}{3} (2^{2m} - 2^{m + 3} + 6) =  -\frac{2}{3} (2^{m}(2^{m}- 8) + 6) < 0
\end{align*}
and
\begin{align*}
LE&(\mathcal{CCC}(G)) - LE^+(\mathcal{CCC}(G))\\ 
&= \frac{4}{p^4 (p + 1)}\big{(}p^{2 m + 2 n + 3} - 3 p^{2(m + n + 1)} + 3 p^{2 m + 2 n + 1} - p^{2(m + n)} - p^{2 m + n + 4} + 3 p^{2 m + n + 3}\\ 
&\quad  - 3 p^{2 m + n + 2} + p^{2 m + n + 1} + 2 p^{m + n + 3} - 2 p^{m + n + 2} + p^{m + 5} - 2 p^{m + 4} + p^{m + 3} - p^5 - p^4 \big{)}  \\
&\quad - \frac{2}{3} (4^m + 2^m - 6)\\
&= \frac{2}{3} (4^m + 5.2^m - 6) - \frac{2}{3} (4^m + 2^m - 6) \\
&= \frac{2^{m + 3}}{3} > 0.
\end{align*}
Therefore, $ E(\mathcal{CCC}(G)) < LE^+(\mathcal{CCC}(G)) < LE(\mathcal{CCC}(G))$.

\noindent \textbf{Case 3.} $n = 2, p \geq 3$, $m \geq1$; or $n \geq 3, p \geq 2$,  $m \geq1$. 

By Theorem \ref{G(p,n,m)}, we have
\begin{align*}
LE^+(\mathcal{CCC}(G)) &- E(\mathcal{CCC}(G)) \\
& = \frac{4p^{2 m + n - 4}}{p + 1} (p - 1)^3  (p^n - p) - 2 (p^{m + n} - p^{m + n - 2} - p^n + p^{n - 1} - 2)\\
& = \frac{2}{p^4 (p + 1)}(2 p^{2 m + 2 n + 3} - 6 p^{2 (m + n + 1)} + 6 p^{2 m + 2 n + 1} - 2 p^{2 (m + n)} - 2 p^{2 m + n + 4}\\
&\quad + 6 p^{2 m + n + 3} - 6 p^{2 m + n + 2} + 2 p^{2 m + n + 1} - p^{m + n + 5} - p^{m + n + 4} + p^{m + n + 3}\\
&\quad + p^{m + n + 2} + p^{n + 5} - p^{n + 3} + 2 p^5 + 2 p^4 )\\
&:= f(p,m,n).
\end{align*}
Therefore, for $n = 2$, we have
\begin{align*}
f(p,m,2)&= \frac{2}{p (p + 1)}(2 p^{2 m + 4} - 8 p^{2 m + 3} + 12 p^{2 m + 2} - 8 p^{2 m + 1} + 2 p^{2 m} - p^{m + 4}\\
&\quad - p^{m + 3} + p^{m + 2} + p^{m + 1} + p^4 + p^2 + 2 p)
\end{align*}
and so $f(3,m,2) = \frac{16}{3} (9^m - 3^{m + 1} + 3) > 0$ for $m\geq 1$.
Also,
\begin{align*}
f(p,m,2)
%&=\frac{2}{p (p + 1)}(2 p^{2 m + 4} - 8 p^{2 m + 3} + 12 p^{2 m + 2} - 8 p^{2 m + 1} + 2 %p^{2 m} - p^{m + 4} - p^{m + 3} + p^{m + 2}\\
%&\quad + p^{m + 1} + p^4 + p^2 + 2 p)\\
& = \frac{2}{p (p + 1)}(p^{2 m + 3}(2p - 9) + (p^{2m + 3} - p^{m + 4}) + (p^{2m + 2} -  p^{m + 3}) + p^{2m + 1}(11p - 8)\\
& \quad + 2 p^{2 m} + p^{m + 2} + p^{m + 1} + p^4 + p^2 + 2 p ) > 0,
\end{align*}
if $m\geq 1$ and $p\geq 5$. For $p=2$, we have
\begin{align*}
f(2,m,n) 
%&= \frac{1}{12} (2^{2 (m + n)} - 2^{2 m + n + 1} - 9.2^{m + n + 1} + 3.2^{n + 2} + 48)\\
& =  \frac{1}{12} (2^{m + n}\left((2^{n - 1} - 1)2^{m + 1} - 18) + 3.2^{n + 2} + 48\right) > 0, 
\end{align*}
if $n\geq 4, p = 2, m\geq 1$. For $n = 3, p = 2, m\geq 1$ we have $f(2,m,3) = 48 (4^m - 3.2^m + 3) = 48 ((2^m - 2)^2 + 2^m - 1) > 0$.

For $p\geq 3, n\geq 3, m\geq 1$, we have
\begin{align*}
f(p,m,n) &= \frac{2}{p^4 (p + 1)}(2 p^{2 m + 2 n + 2} (p - 3) + (p^{2 m + 2 n + 1} - p^{m + n + 5}) + 2 p^{2 m + n + 1} + 4 p^{2 m + n + 3}\\
&\quad + (p^{2 m + 2 n + 1} - 2 p^{2 m + 2 n}) + (p^{2 m + 2 n + 1} - 2 p^{2 m + n + 4}) + 2 p^{2 m + 2 n + 1}\\
&\quad + 2 p^{2 m + n + 2} (p - 3) + (p^{2 m + 2 n + 1} - p^{m + n + 4}) + p^{m + n + 3}\\
&\quad + p^{m + n + 2} + p^{n + 3} (p^2 - 1) + 2 p^5 + 2 p^4 \big{)} > 0.
\end{align*}
 Therefore, $LE^+(\mathcal{CCC}(G)) > E(\mathcal{CCC}(G))$ if $n = 2, p \geq 3$,  $m \geq1$; or $n \geq 3, p \geq 2$, $m \geq1$.
 
Again,
\begin{align*}
LE(\mathcal{CCC}&(G)) - LE^+(\mathcal{CCC}(G))\\
& = \frac{4}{p^4 (p + 1)}\big{(}p^{2 m + 2 n + 3} - 3 p^{2(m + n + 1)} + 3 p^{2 m + 2 n + 1} - p^{2(m + n)} - p^{2 m + n + 4} + 3 p^{2 m + n + 3}\\ 
&\quad  - 3 p^{2 m + n + 2} + p^{2 m + n + 1} + 2 p^{m + n + 3} - 2 p^{m + n + 2} + p^{m + 5} - 2 p^{m + 4} + p^{m + 3}\\
& \quad - p^5 - p^4 \big{)} - \frac{4p^{2 m + n - 4}}{p + 1} (p - 1)^3  (p^n - p)\\
& = \frac{4 (2 p^{m + n + 1} - 2 p^{m + n} + p^{m + 3} - 2 p^{m + 2} + p^{m + 1} - p^3 - p^2)}{p^2 (p + 1)}\\
& = \frac{4 \big{(}(p^{m + n}(p-2) + (p^{m + n + 1} - p^3) + p^{m + 2}(p-2) + p^{m + 1} - p^2)\big{)}}{p^2 (1 + p)} > 0,
\end{align*}
if  $p\geq 2, n\geq 2, m\geq 1$. Therefore, $LE(\mathcal{CCC}(G)) > LE^+(\mathcal{CCC}(G))$, if  $n = 2, p \geq 3$,  $m \geq1$; or $n \geq 3, p \geq 2$,  $m \geq1$. Hence,
$$E(\mathcal{CCC}(G)) < LE^+(\mathcal{CCC}(G))<LE(\mathcal{CCC}(G)),$$ if $n = 2, p \geq 3$,  $m \geq 1$; or $n \geq 3, p \geq 2$,  $m \geq1$. This completes the proof.
\end{proof}

\begin{theorem}\label{hyper-G(p,n,m)}
Let $G= G(p,m,n)$.
\begin{enumerate}
\item If $n = 1$,  $p \geq 2$,  $m \geq 1$; $n=2$, $p=2$,  $m=1, 2$ then $\mathcal{CCC}(G)$ is neither hyperenergetic, borderenergetic, L-hyperenergetic, L-borderenergetic,   Q-hyperenergetic, nor Q-borderenergetic.

\item If $n=2$, $p=2$, $m = 3$; or $n=3$, $p=2$, $m = 1$ then  $\mathcal{CCC}(G)$ is L-hyperenergetic but neither hyperenergetic,   borderenergetic, L-borderenergetic, Q-hyperenergetic nor Q-borderenergetic.

\item If $n=2$, $p=2$,  $m \geq 4$; $n = 2, p \geq 3, m\geq 1$; $n=3, p = 2, m\geq 2$; or $n\geq 4, p\geq 2, m\geq 1$ then  $\mathcal{CCC}(G)$ is L-hyperenergetic  and Q-hyperenergetic but neither hyperenergetic,   borderenergetic, L-borderenergetic nor Q-borderenergetic.
%\item $\mathcal{CCC}(G)$ is neither hyperenergetic nor Q-hyperenergetic nor L-hyperenergetic for  $n=1;m\geq 1;p\geq 2$ or $n=2;m\geq 1;p=2$.
%\item $\mathcal{CCC}(G)$ is L-hyperenergetic but neither hyperenergetic nor Q-hyperenergetic for  $n=2;m\geq 1;p\geq 3$ or $n\geq 3;m\geq 1;p\geq 2$.
\end{enumerate}

\end{theorem}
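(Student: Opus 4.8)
The plan is to recall the standard definitions: a graph $\mathcal{T}$ on $N$ vertices is \emph{hyperenergetic} if $E(\mathcal{T}) > E(K_N) = 2(N-1)$ and \emph{borderenergetic} if $E(\mathcal{T}) = 2(N-1)$, with the $L$- and $Q$- analogues comparing $LE(\mathcal{T})$ and $LE^+(\mathcal{T})$ against $LE(K_N) = LE^+(K_N) = 2(N-1)$. Since $N = |V(\mathcal{CCC}(G))| = p^{m+n-2}(p^2-1)$ by the computation in the proof of Theorem \ref{G(p,n,m)}, the reference value is $E(K_N) = 2(p^{m+n-2}(p^2-1) - 1)$. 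Thus every assertion reduces to comparing the three closed-form energies from Theorem \ref{G(p,n,m)} against this single threshold $2(N-1)$, so the whole argument is a battery of polynomial (in $p$) inequalities, organized by the same case split already used in Theorems \ref{G(p,n,m)} and \ref{cG(p,m,n)}.

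First I would dispose of part (a). Here $E = LE = LE^+ = 2(p^{m+1} - p^{m-1} - p - 1)$ when $n=1$ (by Case 1 of Theorem \ref{cG(p,m,n)}), and the explicit small values when $n=2,p=2,m=1,2$. I would simply verify the difference $E(\mathcal{CCC}(G)) - 2(N-1) = 2(p^{m+1} - p^{m-1} - p - 1) - 2(p^{m+n-2}(p^2-1) - 1)$ and its $L$/$Q$ counterparts are all strictly negative, which for $n=1$ collapses to showing $p^{m+1} - p^{m-1} - p - 1 < p^{m}(p^2-1) - 1 = p^{m+2} - p^m - 1$, i.e. a clean inequality that holds for all $p \geq 2, m \geq 1$. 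The small cases $n=2,p=2,m\in\{1,2\}$ are checked by direct substitution of the numerical energies. Because nothing equals the threshold, borderenergetic is ruled out simultaneously.

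For parts (b) and (c) I would exploit the ordering already established in Theorem \ref{cG(p,m,n)}, namely $E \le LE^+ \le LE$ (with the precise strictness pattern per case). The strategy is: show $E(\mathcal{CCC}(G)) < 2(N-1)$ always (never hyper-/borderenergetic), then locate where $LE$ and $LE^+$ cross the threshold $2(N-1)$. For $LE$-hyperenergeticity I would form $LE(\mathcal{CCC}(G)) - 2(N-1)$ using the "otherwise" branch of $LE$ from Theorem \ref{G(p,n,m)} and factor out the common denominator $p^4(p+1)$, reducing to positivity of a polynomial in $p,m,n$; I expect this to become a sum of manifestly nonnegative monomial groups (the same bracketing device used repeatedly in the proof of Theorem \ref{cG(p,m,n)}). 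For $Q$-hyperenergeticity I would compare the branch $\frac{4p^{2m+n-4}}{p+1}(p-1)^3(p^n-p)$ of $LE^+$ against $2(N-1)$; the boundary cases $n=2,p=2,m=3$ and $n=3,p=2,m=1$ (where $LE^+$ sits \emph{below} $2(N-1)$ but $LE$ sits above) are exactly what separates part (b) from part (c), and these I would pin down by direct evaluation of the small-parameter formulas.

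The main obstacle will be the $Q$-hyperenergetic threshold: unlike $E$ and $LE$, whose comparisons with $2(N-1)$ reduce to monotone polynomial inequalities valid across entire parameter ranges, $LE^+ - 2(N-1)$ changes sign within the region $p=2$ (it is negative for $n=2,m\le 3$ and for $n=3,m=1$, but positive once $m$ or $n$ grows), so the delicate work is isolating the finitely many boundary parameter values and proving the strict inequality $LE^+(\mathcal{CCC}(G)) > 2(N-1)$ uniformly on the complementary infinite range. I would handle the small boundary cases by substitution and the infinite range by factoring $LE^+ - 2(N-1)$ into a product/sum with an explicit positive lower bound, treating $p=2$ and $p\ge 3$ separately as in Theorem \ref{cG(p,m,n)}. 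Throughout, borderenergeticity of each type is excluded as a free by-product, since in every case the relevant energy is shown to be strictly (not weakly) on one side of $2(N-1)$.
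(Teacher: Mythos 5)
Your proposal follows essentially the same route as the paper: compute $N=|V(\mathcal{CCC}(G))|=p^{m+n-2}(p^2-1)$, use $E(K_N)=LE(K_N)=LE^+(K_N)=2(N-1)$ as the single threshold, and compare each closed-form energy from Theorem \ref{G(p,n,m)} against it by case-wise polynomial bracketing into nonnegative groups, isolating the boundary cases $n=2,p=2,m=3$ and $n=3,p=2,m=1$ for the signless Laplacian exactly as the paper does. One minor slip: for $n=1$ the threshold is $2\bigl(p^{m-1}(p^2-1)-1\bigr)$, not $2\bigl(p^{m}(p^2-1)-1\bigr)$, so the relevant difference is exactly $-2p<0$ (as the paper computes) rather than the looser inequality you wrote; this does not affect the validity of the plan.
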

\begin{proof}
By \cite[Proposition 2.6]{sA2020}, we have 
\[
\mathcal{CCC}(G) = (p^n - p^{n - 1})K_{p^{m - n}(p^n - p^{n - 1})} \sqcup K_{p^{n - 1}(p^m - p^{m - 1})} \sqcup K_{p^{m - 1}(p^n - p^{n - 1})}.
\]
Therefore, $|V(\mathcal{CCC}(G))|=  p^{m + n - 2} (p^2 - 1)$ and so
\[
 E(K_{|V(\mathcal{CCC}(G))|})= LE^+(K_{|V(\mathcal{CCC}(G))|}) = LE(K_{|V(\mathcal{CCC}(G))|}) = 2(p^{m + n} - p^{m + n - 2} - 1),
 \]
noting that $E(K_n)= LE(K_n)= LE^+(K_n) = 2(n-1)$.

\vspace{1.5cm}

We shall prove the result by considering the following cases.
 
\noindent \textbf{Case 1.} $n = 1$,  $p \geq 2$ and $m \geq 1$.
 
If $n=1, p \geq 2$ and $m\geq 1$    then, by   Theorem \ref{cG(p,m,n)}, we get
 \[
 LE^+(\mathcal{CCC}(G)) = E(\mathcal{CCC}(G)) = LE(\mathcal{CCC}(G)). 
 \]
By Theorem \ref{cG(p,m,n)}, we also have
\begin{align*}
LE(\mathcal{CCC}(G)) &- 2(p^{m + n} - p^{m + n - 2} - 1) \\ 
&=  \frac{2 (p^{n + 1} - p^n + 2 p) (2 p^{m + n + 1} - 2 p^{m + n} + p^{m + 3} - 2 p^{m + 2} + p^{m + 1} - p^3 - p^2)}{p^3 (1 + p)} \\
& \quad - 2(p^{m + n} - p^{m + n - 2} - 1) \\
&= -2p < 0.
\end{align*}
Therefore, $LE(\mathcal{CCC}(G)) < LE(K_{|V(\mathcal{CCC}(G))|})$ and so
$\mathcal{CCC}(G)$ is neither hyperenergetic, borderenergetic, L-hyperenergetic, L-borderenergetic,   Q-hyperenergetic, nor Q-borderenergetic. Thus, if $n = 1$,  $p \geq 2$ and $m \geq 1$ then $\mathcal{CCC}(G)$ is neither hyperenergetic, borderenergetic, L-hyperenergetic, L-borderenergetic,   Q-hyperenergetic, nor Q-borderenergetic.

\noindent \textbf{Case 2.} $n = 2, p = 2$ and $m \geq1$.

If $n=2$, $p=2$ and $m=1$   then, by   Theorem \ref{cG(p,m,n)} and Theorem \ref{G(p,n,m)}, we get
\[
E(\mathcal{CCC}(G)) < LE^+(\mathcal{CCC}(G)) = LE(\mathcal{CCC}(G)) = \frac{16}{3} < 10 =  LE(K_{|V(\mathcal{CCC}(G))|}).
\]
Therefore, $\mathcal{CCC}(G)$ is neither hyperenergetic, borderenergetic, L-hyperenergetic, L-borderenergetic,   Q-hyperenergetic, nor Q-borderenergetic.
 
 If $n=2$, $p=2$ and $m=2$   then, by   Theorem \ref{cG(p,m,n)} and Theorem \ref{G(p,n,m)}, we get
\[
 LE^+(\mathcal{CCC}(G)) < E(\mathcal{CCC}(G)) < LE(\mathcal{CCC}(G)) = 20 < 22 =  LE(K_{|V(\mathcal{CCC}(G))|}).
\]
Therefore,  $\mathcal{CCC}(G)$ is neither hyperenergetic, borderenergetic, L-hyperenergetic, L-borderenergetic,   Q-hyperenergetic, nor Q-borderenergetic.
 
 If $n=2$, $p=2$ and $m\geq 3$    then, by Theorem \ref{G(p,n,m)}, we get   
$E(\mathcal{CCC}(G))=   2 (3.2^m - 4)$ and $E(K_{|V(\mathcal{CCC}(G))|}) = 2 (3.2^m - 1)$ and so  $E(\mathcal{CCC}(G)) - E(K_{|V(\mathcal{CCC}(G))|}) = 2 (3.2^m - 4) - 6.2^m + 2 = -6 < 0$. Therefore, $\mathcal{CCC}(G)$ is neither hyperenergetic nor borderenergetic.
 
%By Theorem \ref{cG(p,m,n)}, we get
% \[
% E(\mathcal{CCC}(G)) < LE^+(\mathcal{CCC}(G)) < LE(\mathcal{CCC}(G)).
% \]
By Theorem \ref{G(p,n,m)}, we also get
$
LE(\mathcal{CCC}(G)) = \frac{2}{3} (4^m + 5.2^m - 6)
$ for $n=2$, $p=2$ and $m\geq 3$. Therefore, $LE(\mathcal{CCC}(G)) -  (3.2^m -1) = \frac{2}{3} (4^m - 2^{m + 2} - 3)> 0$ and so $LE(\mathcal{CCC}(G)) > LE(K_{|V(\mathcal{CCC}(G))|})$. Hence, $\mathcal{CCC}(G)$ is L-hyperenergetic but not L-borderenergetic for $n=2$, $p=2$ and $m\geq 3$.

If $n=2$, $p=2$ and $m\geq 3$ then, by Theorem \ref{G(p,n,m)}, we  get $LE^+(\mathcal{CCC}(G)) = \frac{2}{3}(2^m -2)(2^m + 3)$. Therefore, $LE^+(\mathcal{CCC}(G)) - (3.2^m -1) =  \frac{2}{3}(2^m(2^m - 8)  - 3)$ and so $LE^+(\mathcal{CCC}(G)) < LE^+(K_{|V(\mathcal{CCC}(G))|})$ or $LE^+(\mathcal{CCC}(G)) > LE^+(K_{|V(\mathcal{CCC}(G))|})$  according as $m = 3$ or $m \geq 4$. Hence, $\mathcal{CCC}(G)$ is neither  Q-hyperenergetic nor Q-borderenergetic if $m = 3$; and if $m \geq 4$ then $\mathcal{CCC}(G)$ is Q-hyperenergetic but not Q-borderenergetic. Thus, if $n=2$, $p=2$ and $m = 3$ then  $\mathcal{CCC}(G)$ is L-hyperenergetic but neither hyperenergetic,   borderenergetic, L-borderenergetic, Q-hyperenergetic nor Q-borderenergetic; and if $n=2$, $p=2$ and $m \geq 4$ then  $\mathcal{CCC}(G)$ is L-hyperenergetic  and Q-hyperenergetic but neither hyperenergetic,   borderenergetic, L-borderenergetic nor Q-borderenergetic.

%\begin{align*}
%LE^+(\mathcal{CCC}(G)) &-  2(p^{m+n}-p^{-2+m+n}-1) \\
%&= \frac{2 (-1 + p) p^{-4 +  n} (-p - p^m + p^{1 + m}) (p^2 + p^3 + 2 p^{1 + m} - 2 p^{2 + m} - 2 p^{m + n} + 2 p^{1 + m + n})}{1 + p}\\
%& \quad -   2(p^{m+n}-p^{-2+m+n}-1)\\
%&= \frac{2p^{-2}(-p - p^m + p^{1 + m}) (p^2 + p^3 + 2 p^{1 + m} - 2 p^{2 + m} - 2 p^{m + %2} + 2 p^{3 + m })}{1 + p}  -  2(p^{m+2}-p^{m}-1)\\
%& = \frac{2(2^m -2)(2^m + 3)}{3}  -  2(3.2^m  -1)\\
%& = \frac{2(2^{2m} + 2^m - 6 - 9.2^m + 3)}{3} = \frac{2(2^{2m}  - 8.2^m - 3)}{3} = \frac{2(2^m(2^m - 8)  - 3)}{3}
%\end{align*}

%Therefore $\mathcal{CCC}(G)$ is neither hyperenergetic nor L-hyperenergetic nor Q-hyperenergetic for $n=2$; $m\geq 3$ ; $p=2$.

\noindent \textbf{Case 3.} $n=2$, $p\geq 3$,  $m\geq 1$; or $n\geq 3$, $p\geq 2$, $m\geq 1$.

By Theorem \ref{G(p,n,m)}, we get
\[
E(\mathcal{CCC}(G))= 2 (p^{m + n} - p^{m + n - 2} - p^n + p^{n - 1} - 2),
\]
\[
LE^+(\mathcal{CCC}(G)) = \frac{4p^{2 m + n - 4}}{p + 1} (p - 1)^3  (p^n - p)
\]
and
\begin{align*}
LE(\mathcal{CCC}(G)) &= \frac{4}{p^4 (p + 1)}\big{(}p^{2 m + 2 n + 3} - 3 p^{2(m + n + 1)} + 3 p^{2 m + 2 n + 1} - p^{2(m + n)} - p^{2 m + n + 4}\\ 
&\quad  + 3 p^{2 m + n + 3} - 3 p^{2 m + n + 2} + p^{2 m + n + 1} + 2 p^{m + n + 3} - 2 p^{m + n + 2}\\
&\quad  + p^{m + 5} - 2 p^{m + 4} + p^{m + 3} - p^5 - p^4 \big{)}.
\end{align*}
We have
\begin{align*}
E(\mathcal{CCC}(G)) - 2(p^{m + n} - p^{m + n - 2} - 1) & = 2 (p^{m + n} - p^{m + n - 2} - p^n + p^{n - 1} - 2) \\
 & \quad - 2(p^{m + n} - p^{m + n - 2} - 1) \\ & = - 2 p^n + 2 p^{n - 1} - 2 < 0.
\end{align*}
Therefore, $\mathcal{CCC}(G)$ is neither hyperenergetic nor borderenergetic. Also,
\begin{align*}
LE^+&(\mathcal{CCC}(G)) - 2(p^{m + n} - p^{m + n - 2} - 1) \\
&= \frac{4p^{2 m + n - 4}}{p + 1} (p - 1)^3  (p^n - p)  - 2(p^{m + n} - p^{m + n - 2} - 1)\\
&= \frac{2}{p^4 (p + 1)}(2 p^{2 m + 2 n + 3} - 6 p^{2 m + 2 n + 2} + 6 p^{2 m + 2 n + 1} - 2 p^{2 m + 2 n} - 2 p^{2 m + n + 4} + 6 p^{2 m + n + 3}\\
&\quad - 6 p^{2 m + n + 2} + 2 p^{2 m + n + 1} - p^{m + n + 5} - p^{m + n + 4} + p^{m + n + 3} + p^{m + n + 2} + p^5 + p^4)\\
& := f_{1}(p,m,n).
\end{align*}
Now, for  $p = 2,$ $n \geq 3$ and $m\geq 2$, we have
\begin{align*}
f_{1}(2,m,n) 
%&= \frac{1}{12} (2^{2 (m + n)} - 2^{1 + 2 m + n} - 9.2^{1 + m + n} + 24)\\
&= \frac{1}{12}\left(2^{m + n}(2^m(2^n - 2)-18) + 24\right) > 0.
\end{align*}
For $p=2, n=3, m=1$ we have $f_{23}(2,1,3)= -6 < 0$. For $n=2, p=3$ we have 
\[
f_{1}(3,m,2) = 4^{m + 1} - 3.2^{m + 2} + 2 > 0,
\]
if $m\geq 2$. For $n=2, p=3$ and  $m=1$ we have $f_{1}(3,1,2) = 2 > 0$. For $n=2$ we have
\begin{align*}
f_{1}(p,m,2) &= \frac{2}{p (p + 1)} (2 p^{2 m + 4} - 8 p^{2 m + 3} + 12 p^{2 m + 2} - 8 p^{2 m + 1} + 2 p^{2 m} - p^{m + 4} - p^{m + 3} + p^{m + 2} \\
&\quad  + p^{m + 1} + p^2 + p)\\
& = \frac{2}{p (p + 1)} (2p^{2 m + 3}(p - 5) + ( p^{2m + 3} - p^{m + 4}) + (p^{2m + 3} - p^{m + 3}) + (12 p^{2 m + 2} - 8 p^{2 m + 1})\\
&\quad + 2 p^{2m} + p^{m + 2} + p^{m + 1} + p^2 + p\big{)} > 0,
\end{align*}
if $p\geq 5,m\geq 1$. For $n\geq 3, p\geq 3, m\geq 1$ we have
\begin{align*}
f_{1}(p,m,n) &=  \frac{2}{p^4 (p + 1)}(2 p^{2 m + 2 n + 3} - 6 p^{2 m + 2 n + 2} + 6 p^{2 m + 2 n + 1} - 2 p^{2 m + 2 n} - 2 p^{2 m + n + 4} + 6 p^{2 m + n + 3}\\
&\quad - 6 p^{2 m + n + 2} + 2 p^{2 m + n + 1} - p^{m + n + 5} - p^{m + n + 4} + p^{m + n + 3} + p^{m + n + 2} + p^5 + p^4)\\
&= \frac{2}{p^4 (p + 1)}((2 p^{2 m + 2 n + 3} - 6 p^{2 (m + n + 1)}) + (2 p^{2 m + 2 n + 1} - 2 p^{2 m + n + 4}) + (6 p^{2 m + n + 3}\\
&\quad - 6 p^{2 m + n + 2}) + 2 p^{2 m + n + 1} + (p^{2 m + 2 n + 1} - p^{m + n + 5}) + ( 2 p^{2 m + 2 n + 1} - p^{m + n + 4})\\
&\quad + (p^{2 m + 2 n + 1} - 2 p^{2 (m + n)}) + p^{m + n + 3} + p^{m + n + 2} + p^5 + p^4 \big{)} > 0.
\end{align*}
Therefore, $\mathcal{CCC}(G)$ is neither Q-hyperenergetic nor Q-borderenergetic for $n=3, p=2, m=1$. For $n = 2, p \geq 3, m\geq 1$; $n=3, p = 2, m\geq 2$; $n\geq 4, p\geq 2, m\geq 1$; $\mathcal{CCC}(G)$ is Q-hyperenergetic but not Q-borderenergetic.

We have
\begin{align*}
LE&(\mathcal{CCC}(G)) - 2(p^{m + n} - p^{m + n - 2} - 1) \\
&= \frac{4}{p^4 (p + 1)}\big{(}p^{2 m + 2 n + 3} - 3 p^{2(m + n + 1)} + 3 p^{2 m + 2 n + 1} - p^{2(m + n)} - p^{2 m + n + 4} + 3 p^{2 m + n + 3}\\ 
&\quad  - 3 p^{2 m + n + 2} + p^{2 m + n + 1} + 2 p^{m + n + 3} - 2 p^{m + n + 2} + p^{m + 5} - 2 p^{m + 4}\\
&\quad  + p^{m + 3} - p^5 - p^4 \big{)} - 2(p^{m + n} - p^{m + n - 2} - 1) \\
&= \frac{2}{p^4 (p + 1)} (2 p^{2 m + 2 n + 3} - 6 p^{2 (m + n + 1)} + 6 p^{2 m + 2 n + 1} -  2 p^{2 (m + n)} - 2 p^{2 m + n + 4} + 6 p^{2 m + n + 3}\\
&\quad - 6 p^{2 m + n + 2} + 2 p^{2 m + n + 1} - p^{m + n + 5} - p^{m + n + 4} + 5 p^{m + n + 3} - 3 p^{m + n + 2} + 2 p^{m + 5}\\
&\quad - 4 p^{m + 4} + 2 p^{m + 3} - p^5 - p^4 ) := f_{2}(p,m,n).
\end{align*}
For $n= 2$,  $p\geq 3$ and $m\geq 1$, we have
\begin{align*}
f_{2}(p,m,n) &= \frac{2}{p (p + 1)} (2 p^{2 m + 4} - 8 p^{2 m + 3} + 12 p^{2 m + 2} - 8 p^{2 m + 1} + 2 p^{2 m} - p^{m + 4} - p^{m + 3}\\
&\quad + 7 p^{m + 2} - 7 p^{m + 1} + 2 p^m - p^2 - p).
\end{align*}
Therefore, for $n= 2$, $p\geq 3$ and $m = 1$    we have $f_{2}(p, m, n)= \frac{2}{p + 1} (2 p^5 - 9 p^4 + 11 p^3 - p^2 - 6 p + 1) = \frac{2}{p + 1} (p^3(p - 3)^2 + p^4(p - 3) + 3 p^2(p - 1) + 2 p(p - 3) + 1)> 0$. For $n= 2$, $p\geq 3$ and $m\geq 2$    we have
\begin{align*}
f_{2}(p,m,n) &= \frac{2}{p (p + 1)} \big{(} (2 p^{2 m + 2}(p - 2)^2 - p^{m + 3}) + p^{2 m + 1}(3 p - 8) + p^{m + 2}(p^m - p^2) \\ 
&\quad + (2 p^{2 m} - p^2) + 7 p^{m + 1}(p - 1) + (2 p^m - p)\big{)}> 0.
\end{align*}
Therefore, if $n= 2$,  $p\geq 3$ and $m\geq 1$ then $LE(\mathcal{CCC}(G))  > LE(K_{|V(\mathcal{CCC}(G))|})$ and so  $\mathcal{CCC}(G)$ is   L-hyperenergetic but not L-borderenergetic.

 For $n\geq 3$, $p=2$ and $m\geq 1$   we have
\begin{align*}
f_{2}(p,m,n) &= \frac{1}{12} (2^{2 (m + n)} - 2^{2 m + n + 1} - 5.2^{m + n + 1} + 2^{m + 3} - 24)\\ 
& \geq \frac{1}{12} (4.2^{2m + n + 1} - 2^{2 m + n + 1} - 5.2^{m + n + 1} + 2^{m + 3} - 24)\\
&= \frac{1}{12} \big{(}(2^{m + n + 1}(3.2^m - 5) - 24) +  2^{m + 3}\big{)} > 0.
\end{align*}
Also, for $n\geq 3$, $p\geq 3$ and $m\geq 1$  we have 
\begin{align*}
f_{2}(p,m,n) &= \frac{4}{p^4(p + 1)}( 2p^{ 2m + 2n + 2 }( p - 3 ) + p^{ m + n + 1 }( p^{ m + n } - p^4 ) + p^{ m + n + 1 }(p^{ m + n } - p^3 )\\
&\quad + 2p^{ 2m + 2n }( p - 1 ) + 6p^{2m + n + 2}( p - 1 ) + 2p^{ 2m + n + 1 }( p^n - p^3 ) + p^{ m + n + 1 }( 2p^m - 3p )\\
& \quad + 2p^{ m + 4 }( p - 2 ) + p^4(2p^{ m - 1 } - 1) + p^3 ( 5 p^{ m + n } - p^2 )\big{)} > 0.
\end{align*}
Therefore,   if $n\geq 3$, $p\geq 2$ and $m\geq 1$ then $LE(\mathcal{CCC}(G))  > LE(K_{|V(\mathcal{CCC}(G))|})$ and so  $\mathcal{CCC}(G)$ is   L-hyperenergetic but not L-borderenergetic. Thus, if $n=2$, $p\geq 3$ and $m\geq 1$ or $n\geq 3$, $p\geq 2$ and $m\geq 1$ then  $\mathcal{CCC}(G)$ is L-hyperenergetic but neither hyperenergetic,   borderenergetic, L-borderenergetic, Q-hyperenergetic nor Q-borderenergetic.
\end{proof}
\section{Conclusion}
By Theorem \ref{G(p,n,m)}, it follows that $\spec({\mathcal{T}}), \L-spec({\mathcal{T}})$ and $\Q-spec({\mathcal{T}})$ contain only integers if ${\mathcal{T}} = \mathcal{CCC}(G(p,m,n))$. Therefore,    commuting conjugacy class graph of $G(p,m,n)$ is super integral. The same conclusion has drawn for commuting conjugacy class graph of the groups $D_{2n}, Q_{4m}, SD_{8n}, V_{8n}$ and $U_{(n, m)}$ in \cite{bN20}. In general, it may be interesting to characterize all finite groups whose commuting conjugacy class graphs are super integral. Similar study has been carried in \cite{dN2018} for commuting graphs of finite groups.  

In Theorem \ref{cG(p,m,n)}, various energies of commuting conjugacy class graphs of  $G(p,m,n)$ are compared. It is also observed that  $E(\mathcal{T}) \leq LE(\mathcal{T})$ and $LE^+(\mathcal{T}) \leq LE(\mathcal{T})$. Thus, it follows that E-LE Conjecture of Gutman et al. \cite{bN20,gavbr} holds for ${\mathcal{T}} = \mathcal{CCC}(G(p,m,n))$. However, $\mathcal{CCC}(G(p,m,n))$ provides negative answer to  \cite[Question 1.2]{dBN2020}. Similar conclusion has drawn for commuting conjugacy class graph and commuting   graph of many other families of finite groups in \cite{dN2018} and \cite{dBN2020} respectively.

Finally, in Theorem \ref{hyper-G(p,n,m)}, various energies of $\mathcal{CCC}(G(p,m,n))$ and $K_{|V(G(p,m,n))|}$ are compared and obtained that $\mathcal{CCC}(G(p,m,n))$ is neither hyperenergetic, borderenergetic,   L-borderenergetic   nor Q-borderenergetic. It is also observed that 
$\mathcal{CCC}(G(p,m,n))$ is L-hyperenergetic if   
$n=2$, $p=2$,  $m \geq 3$; $n \geq 3, p \geq 2, m\geq 1$;
and $\mathcal{CCC}(G(p,m,n))$ is Q-hyperenergetic if $n=2$, $p=2$,  $m \geq 4$; $n = 2, p \geq 3, m\geq 1$; $n=3, p = 2, m\geq 2$; or $n\geq 4, p\geq 2, m\geq 1$.

\noindent {\bf Acknowledgements}
    The first author is thankful to Council of Scientific and Industrial Research  for the fellowship (File No. 09/796(0094)/2019-EMR-I).

\end{document}